\DeclareSymbolFont{bbold}{U}{bbold}{m}{n}
\DeclareMathSymbol{\numberone}{\mathord}{bbold}{`1}
\newcommand{\mrd}{\mathrm d}
\newcommand{\bbR}{\mathbb R}
\newcommand{\mcU}{\mathcal U}
\newcommand{\mcV}{\mathcal V}
\newcommand{\mfX}{\mathfrak X}
\newcommand{\ra}{\rightarrow}
\newcommand{\xlra}[1]{\xrightarrow{\ #1\ }}
\newcommand{\lra}{\longrightarrow}
\newcommand{\pullback}{\ar@{}[rd]|(.3){\scalebox{1.5}{$\displaystyle\lrcorner$}}}
\newcommand{\pb}[1]{\ar@{}[#1]|(.3){\scalebox{1.5}{$\displaystyle\lrcorner$}}}
\newcommand{\transversepullback}{\ar@{}[rd]|(.3){\scalebox{1.5}{$\displaystyle\lrcorner$}}\ar@{}[rd]|(.25){\pitchfork}}
\newcommand{\tpb}[1]{\ar@{}[#1]|(.3){\scalebox{1.5}{$\displaystyle\lrcorner$}}\ar@{}[rd]|(.25){\pitchfork}}
\newcommand{\pushout}{\ar@{}[lu]|(.3){\scalebox{1.5}{$\displaystyle\ulcorner$}}}
\newcommand{\Charts}{\mathrm{Charts}}
\newcommand{\codim}{\mathop\mathrm{codim}}
\newcommand{\Diff}{\mathrm{Diff}}
\newcommand{\id}{\mathrm{id}}
\newcommand{\imm}{\mathrm{imm}}
\newcommand{\inv}{\mathrm{inv}}
\newcommand{\interior}{\mathop\mathrm{int}}
\newlength{\hlp}
\theoremstyle{plain}
\newtheorem{theorem}{Theorem}
\newtheorem{lemma}[theorem]{Lemma}
\newtheorem{corollary}[theorem]{Corollary}
\theoremstyle{definition}
\newtheorem*{sconstruction}{Construction}
\theoremstyle{remark}
\newtheorem*{remark}{Remark}
\begin{document}

\title{A generalization of Thom's transversality theorem}
\author{Luk\'a\v{s} Vok\v{r}\'inek}
\address{Department of Mathematics and Statistics\\Masaryk University\\Kotl\'a\v rsk\'a 2\\611 37 Brno\\Czech Republic}
\email{koren@math.muni.cz}
%\date{\today}
\thanks{Research was supported by the grant MSM 0021622409 of the Czech Ministry of Education and by the grant 201/05/2117 of the Grant Agency of the Czech Republic.}
\keywords{Transversality, residual, generic, restriction, fibrewise singularity}
\subjclass[2000]{57R35, 57R45}
\begin{abstract}
We prove a generalization of Thom's transversality theorem. It gives conditions under which the jet map $f_*|_Y:Y\subseteq J^r(D,M)\ra J^r(D,N)$ is generically (for $f:M\ra N$) transverse to a submanifold $Z\subseteq J^r(D,N)$. We apply this to study transversality properties of a restriction of a fixed map $g:M\ra P$ to the preimage $(j^sf)^{-1}(A)$ of a submanifold $A\subseteq J^s(M,N)$ in terms of transversality properties of the original map $f$. Our main result is that for a reasonable class of submanifolds $A$ and a generic map $f$ the restriction $g|_{(j^sf)^{-1}(A)}$ is also generic. We also present an example of $A$ where the theorem fails.
\end{abstract}

\maketitle

\setcounter{section}{-1}

\section{Introduction}

We start by reminding that for smooth manifolds $M$ and $N$ the set $C^\infty(M,N)$ of smooth maps is endowed with two topologies called weak (compact-open) and strong (Whitney) topology. They agree when $M$ is compact. We say that a subset of a topological space is \emph{residual} if it contains a countable intersection of open dense subsets. The Baire property for $C^\infty(M,N)$ then guarantees that it is automatically dense. This holds for both topologies but is almost exclusively used for the strong one. Clearly every residual subset of $C^\infty(M,N)$ for the strong topology is also residual for the weak topology. The following is our main theorem in which we denote by $J^r_\imm(D,M)$ the subspace of all jets of immersions.

{\renewcommand{\thetheorem}{A}
\begin{theorem} \label{theorem_general}
Let $D$, $M$, $N$ be manifolds, $Y\subseteq J^r_\imm(D,M)$ and $Z\subseteq J^r(D,N)$ submanifolds. Let us further assume that $\sigma_Y\pitchfork\sigma_Z$, where \[\sigma_Y=\sigma|_Y:Y\subseteq J^r(D,M)\rightarrow D\quad\textrm{and}\quad \sigma_Z=\sigma|_Z:Z\subseteq J^r(D,N)\rightarrow D\] are the restrictions of the source maps. For a smooth map $f:M\rightarrow
N$ let $f_*|_Y$ denote the map
\[\xymatrix{
Y \ar@{c->}[r] & J^r_\imm(D,M) \ar[r]^-{f_*} & J^r(D,N)
}.\]
Then the subset
\[\mfX:=\bigl\{f\in C^\infty(M,N)\ \bigl|\ f_*|_Y\pitchfork Z\bigr\}\]
is residual in $C^\infty(M,N)$ with the strong topology, and open if $Z$ is closed (as a subset) and the target map $\tau_Y:Y\rightarrow M$ is proper.
\end{theorem}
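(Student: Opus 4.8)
The plan is to run the classical proof of Thom's transversality theorem (in the style of Golubitsky--Guillemin or Hirsch), isolating the two places where the hypotheses enter. The immersion hypothesis on $Y$ will guarantee that a sufficiently rich family of perturbations of $f$ produces a family of perturbations of $f_*|_Y$ that is submersive onto the \emph{source-fibre} directions of $J^r(D,N)$; and the hypothesis $\sigma_Y\pitchfork\sigma_Z$ will take care of the one remaining direction --- along $D$ --- which no perturbation of $f$ can affect, because $f_*$ preserves sources. Since $C^\infty(M,N)$ with the strong topology is a Baire space, for the residual statement it suffices to exhibit $\mfX$ as a countable intersection of open dense subsets, which after a standard (if slightly fiddly) localization reduces to the following local density statement: for each $y_0\in Y$ there is a neighbourhood $W$ of $y_0$ in $Y$, with $\overline W$ compact, so that $f$ can be approximated in the strong topology by maps $g$ with $g_*|_Y\pitchfork Z$ on $W$. (The globalization uses that $Z$ is locally closed, so that transversality of $f_*|_Y$ to $Z$ over a relatively compact piece of $Y$ mapping into a relatively compact piece of $Z$ is an open condition on $f$ --- depending only on $j^rf$ over the compact set $\tau_Y(\overline W)$ --- and one intersects countably many such pieces covering $Y$ and $Z$.)

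For the local family, fix $y_0$, set $x_0=\sigma_Y(y_0)$, $p_0=\tau_Y(y_0)$, and write $y_0=j^r_{x_0}\gamma$ with $\gamma$ an immersion-germ. Choose coordinates on $M$ near $p_0$ making $\gamma$ a linear slice inclusion $\mathbb R^d\hookrightarrow\mathbb R^m$ and coordinates on $N$ near $f(p_0)$; let $S$ be a small ball in the finite-dimensional vector space of $\mathbb R^n$-valued polynomials of degree $\le r$ on $\mathbb R^m$, and set $f_s=f+\rho\cdot P_s$, where $\rho$ is a bump function that is $\equiv1$ on a neighbourhood of $\tau_Y(\overline W)$ (chosen inside the $M$-chart, which forces $W$ to be small) and supported in that chart, and $P_s$ has coefficient vector $s$. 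Consider
\[\Phi\colon W\times S\lra J^r(D,N),\qquad\Phi(y,s)=(f_s)_*(y).\]
For $y=j^r_x\gamma_y\in W$, since $\rho\equiv1$ near $\gamma_y(x)=\tau_Y(y)$, one has $\Phi(y,s)=j^r_x(f\circ\gamma_y)+j^r_x(P_s\circ\gamma_y)$; and because $\gamma_y$ is an immersion, restriction along its slice carries the degree-$\le r$ polynomials on $\mathbb R^m$ \emph{onto} all degree-$\le r$ polynomials on $\mathbb R^d$ --- the constant term included, which moves the target. Hence $s\mapsto\Phi(y,s)$ is a submersion onto the source-fibre $\sigma^{-1}(\sigma_Y(y))\subseteq J^r(D,N)$; by lower semicontinuity of rank this is maximal, hence holds, for all $(y,s)$ in a small enough $W\times S$. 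This is the one place the immersion hypothesis is used.

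Now I claim $\Phi\pitchfork Z$. Suppose $\Phi(y,s)=z\in Z$; since $f_*$ preserves sources, $\sigma_Z(z)=\sigma(z)=\sigma_Y(y)=:x$, so $(y,z)$ is a compatible pair for $\sigma_Y\pitchfork\sigma_Z$. Given $v\in T_zJ^r(D,N)$, write $d\sigma_z(v)=d(\sigma_Y)_y(\xi)+d(\sigma_Z)_z(\eta)$ with $\xi\in T_yY$, $\eta\in T_zZ$. From $\sigma\circ\Phi=\sigma_Y\circ\mathrm{pr}_W$ we get $d\sigma_z\bigl(d\Phi_{(y,s)}(\xi,0)\bigr)=d(\sigma_Y)_y(\xi)$, while $d(\sigma_Z)_z(\eta)=d\sigma_z(\eta)$; therefore $v-\eta-d\Phi_{(y,s)}(\xi,0)\in\ker d\sigma_z=T_z\bigl(\sigma^{-1}(x)\bigr)\subseteq\im d\Phi_{(y,s)}$ by the submersivity above, so $v\in T_zZ+\im d\Phi_{(y,s)}$. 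This proves $\Phi\pitchfork Z$. The parametric transversality theorem then gives a full-measure, in particular dense, set of $s\in S$ with $(f_s)_*|_W=\Phi(\cdot,s)$ transverse to $Z$; and $\rho P_s\to0$ in the strong topology as $s\to0$, with fixed compact support, so arbitrarily small such perturbations exist. Feeding this into the localization above shows $\mfX$ is residual.

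For the openness clause, assume $Z$ is closed and $\tau_Y$ proper. Then for any compact $K\subseteq Y$ the set $\{f:f_*|_Y\pitchfork Z\text{ on }K\}$ is open in the weak, hence strong, topology (transversality to a \emph{closed} submanifold over a compact set is an open condition, depending only on $j^rf$ over the compact set $\tau_Y(K)$). Pick a locally finite cover of $M$ by compacta $L_j$; properness makes each $K_j:=\tau_Y^{-1}(L_j)$ compact, and $\{K_j\}$ covers $Y$, so $\mfX=\bigcap_j\{f:f_*|_Y\pitchfork Z\text{ on }K_j\}$. Around any $g\in\mfX$ choose, for each $j$, a weak neighbourhood lying in the $j$-th set and controlling $j^rf$ only over $\tau_Y(K_j)\subseteq L_j$; since $\{L_j\}$ is locally finite, the intersection of these is a basic strong neighbourhood of $g$ contained in $\mfX$, so $\mfX$ is open. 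The step I expect to cost the most effort is not any single computation but combining the two inputs correctly --- making precise that the polynomial perturbations of $f$ surject onto the source-fibre directions of $f_*|_Y$ uniformly over $W$ (this is where immersivity is essential), and then checking that this, together with $\sigma_Y\pitchfork\sigma_Z$, yields genuine transversality of $\Phi$ --- after which the remaining localization is the usual Thom-type bookkeeping.
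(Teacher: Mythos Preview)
Your argument is correct and matches the paper's proof essentially step for step: polynomial perturbations $f\mapsto f+\lambda q$ together with the immersion hypothesis give a submersion onto the source fibre of $J^r(D,N)$ (the paper phrases this via a local left inverse of the immersion rather than ``restriction of polynomials along a slice,'' but it is the same linear-algebra fact), and the hypothesis $\sigma_Y\pitchfork\sigma_Z$ supplies the remaining $D$-direction, after which parametric transversality and a countable localization finish. The only organizational differences are that the paper isolates the localization step as a separate general lemma and deduces openness in one line from continuity of $f\mapsto f_*|_Y$ between strong topologies when $\tau_Y$ is proper, whereas you carry out both by hand; your more explicit verification that fibrewise submersivity plus $\sigma_Y\pitchfork\sigma_Z$ actually yields $\Phi\pitchfork Z$ is in fact more detailed than what the paper writes.
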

\addtocounter{theorem}{-1}}

We are interested in the theorem mainly because of the following two applications, the first of which is the classical theorem of Thom.

{\renewcommand{\thetheorem}{B}
\begin{theorem}[Thom's Transversality Theorem] \label{theorem_thom}
Let $M$, $N$ be manifolds, $Z\subseteq J^r(M,N)$ a submanifold. Then the subset
\[\mfX:=\{f\in C^\infty(M,N)\ |\ j^rf\pitchfork Z\}\]
is residual in $C^\infty(M,N)$. It is moreover open provided that $Z$ is closed (as a subset).
\end{theorem}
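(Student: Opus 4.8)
The plan is to obtain Theorem~\ref{theorem_thom} as the special case of Theorem~\ref{theorem_general} in which $D:=M$ and $Y$ is chosen so that $f_*|_Y$ is literally the jet prolongation $j^rf$. Concretely, I would let $Y\subseteq J^r(M,M)$ be the image of the section $j^r(\id_M)\colon M\ra J^r(M,M)$, that is $Y=\{\,j^r_x(\id_M)\mid x\in M\,\}$. Since $\id_M$ is an immersion, $Y$ lies in $J^r_\imm(M,M)$, and since $j^r(\id_M)$ is a section of the source projection $\sigma\colon J^r(M,M)\ra M$ it is a smooth embedding; hence $Y$ is a submanifold and $\sigma_Y=\sigma|_Y\colon Y\ra M$ is a diffeomorphism, namely the inverse of $j^r(\id_M)$.

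With this choice the hypothesis $\sigma_Y\pitchfork\sigma_Z$ of Theorem~\ref{theorem_general} is automatic, because a diffeomorphism is in particular a submersion and so is transverse to the source map of any submanifold $Z\subseteq J^r(M,N)$. It then remains to identify the objects appearing in the two statements. For $f\in C^\infty(M,N)$ the defining property of the induced jet map gives $f_*\bigl(j^r_x(\id_M)\bigr)=j^r_x(f\circ\id_M)=j^r_xf$, so under the diffeomorphism $\sigma_Y\colon Y\xra{\ \cong\ }M$ the map $f_*|_Y$ corresponds exactly to $j^rf\colon M\ra J^r(M,N)$. As transversality to $Z$ is unaffected by precomposition with a diffeomorphism, $f_*|_Y\pitchfork Z$ if and only if $j^rf\pitchfork Z$, whence the set $\mfX$ of Theorem~\ref{theorem_thom} coincides with the set $\mfX$ of Theorem~\ref{theorem_general} for these data. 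Theorem~\ref{theorem_general} therefore gives that $\mfX$ is residual in $C^\infty(M,N)$ for the strong topology, and by the remark in the introduction it is then residual for the weak topology as well, covering the residuality claim.

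For the openness clause I would apply the second part of Theorem~\ref{theorem_general}, which requires $Z$ closed as a subset and the target map $\tau_Y\colon Y\ra M$ proper. But the target of the jet $j^r_x(\id_M)$ is $\id_M(x)=x$, so $\tau_Y$ agrees with $\sigma_Y$ and is again a diffeomorphism, in particular proper; hence $\mfX$ is open whenever $Z$ is closed. I do not expect any real obstacle beyond Theorem~\ref{theorem_general} itself: the only points needing care are the elementary bookkeeping facts about $J^r(M,M)$ — that $j^r(\id_M)$ embeds $M$ as a submanifold of $J^r_\imm(M,M)$ along which both the source and target projections restrict to the identity — and the verification that $f_*|_Y$ really is $j^rf$ under this identification. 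Once these are in place the transversality and openness conclusions transfer directly.
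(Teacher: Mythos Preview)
Your proposal is correct and is exactly the approach the paper takes: apply Theorem~\ref{theorem_general} with $D=M$ and $Y$ the image of $j^r(\id_M)$, so that $\sigma_Y=\id=\tau_Y$ is a diffeomorphism (hence a submersion transverse to any $\sigma_Z$, and proper), and $f_*|_Y$ is identified with $j^rf$. The paper's proof is just a terser statement of the same argument.
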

\addtocounter{theorem}{-1}}

To explain the second application we need to introduce a bit of notation. Let us consider the following diagram
\[\xymatrix{
& f^{-1}(A) \ar[r] \ar@{c->}[d]^-j & A \ar@{c->}[d] \\
g^{-1}(B) \ar@{c->}[r] \ar[d] & M \ar[r]^{f} \ar[d]^{g} & N \\
B \ar@{c->}[r] & P
}\]
of smooth manifolds and smooth maps between them where $\xymatrix@1{{}\ar@{c->}[r]&{}}$ indicates embeddings. We assume that $f\pitchfork A$ and $g\pitchfork B$ for the two pullbacks to be defined (which we emphasize by saying that they are transverse pullbacks) and also for some technical reasons. Suppose that we fix $g$ and allow ourselves to change $f$ (but only in such a way that $f\pitchfork A$). It is not hard to describe when the composition $gj=g|_{f^{-1}(A)}$ is transverse to $B$. A more general condition on $gj$ is whether it satisfies some form of jet transversality. First we have to solve the problem of not having the source of $J^r(f^{-1}(A),P)$ fixed.

\begin{sconstruction} Let $D$ be a $d$-dimensional manifold and
\[\Diff(\bbR^d,0)=\inv G_0(\bbR^d,\bbR^d)_0\]
the group\footnote{If we wanted to give $\Diff(\bbR^d,0)$ a topology we could do so by inducing the topology via the map $\Diff(\bbR^d,0)\ra J^\infty_0(\bbR^d,\bbR^d)_0$. Or one could replace $\Diff(\bbR^d,0)$ by its image - the subspace of
invertible $\infty$-jets.} of germs at 0 of local diffeomorphisms $\bbR^d\rightarrow\bbR^d$ fixing 0. Define a principal $\Diff(\bbR^d,0)$-bundle
\[\Charts_D=\inv G_0(\bbR^d,D)\xlra{\mathrm{ev}_0} D\]
of germs at 0 of local diffeomorphisms $\bbR^d\rightarrow D$. If $F$ is any manifold with a (smooth in some sense) action of $\Diff(\bbR^d,0)$ then we can construct an associated bundle
\[D[F]:=\Charts_D\times_{\Diff(\bbR^d,0)}F\longrightarrow D.\]
Any bundle of this form is ``local''. Observe that this construction is functorial in $D$ on the category of $d$-dimensional manifolds and local diffeomorphisms. As an example the bundle $J^r(D,P)$ of $r$-jets of maps $D\rightarrow P$ is a local bundle as
\[D[J^r_0(\bbR^d,P)]=\Charts_D\times_{\Diff(\bbR^d,0)}J^r_0(\bbR^d,P)\cong J^r(D,P)\]
where $J^r_0(\bbR^d,P)$ is the subspace of $J^r(\bbR^d,P)$ of $r$-jets with source 0. The bijection is provided by the map
\[[u,\alpha]\mapsto\alpha\circ j^r_{u(0)}(u^{-1}).\]
%It is useful to see how local bundles are constructed. Any chart
%$u:U\rightarrow D$ on $D$ gives a section of $\Charts_D$ over
%$u(U)$:
%\[\sigma_u:u(x)\mapsto u\circ tr_x\]
%where $\tr_x$ denotes the translation by $x$ on $\bbR^d$. If
%$v:V\rightarrow D$ is any other chart (for simplicity of notation
%chosen so that it has the same image) let $\varphi=v^{-1}\circ
%u:U\rightarrow V$ denote the difference map. Then
%\[\sigma_u(u(x))=\sigma_v(u(x))\cdot
%(\tr_{-\varphi(x)}\circ\varphi\circ\tr_x)\] So in practice the
%associated bundle $D[F]$ is a quotient of
%$\coprod\limits_{u:U\rightarrow D}U\times F$ (ranging over all
%charts~$u$) by the equivalence relation coming from the transition
%functions as above. In particular any chart $u:U\rightarrow D$
%gives a trivialization $\tilde{u}:U\times
%J^r_0(\bbR^d,N)\rightarrow J^r(D,N)$ given by
%\[\tilde{u}(x,\alpha)=\alpha\circ j^r_{u(x)}(\tr_{-x}\circ u^{-1})\]

Having a $\Diff(\bbR^d,0)$-invariant submanifold $B\subseteq J^r_0(\bbR^d,P)$ we get an associated subbundle $D[B]\subseteq J^r(D,P)$ for any $d$-dimensional manifold $D$. This allows us to talk about jet transversality conditions on a map $D\rightarrow P$ without specifying what $D$ (and hence also $J^r(D,P)$) is.
\end{sconstruction}

Let $A\subseteq J^s(M,N)$ be a submanifold and $j^sf\pitchfork A$. Then $f^*A:=(j^sf)^{-1}(A)$ is a submanifold of $M$ and for any $\Diff(\bbR^d,0)$-invariant submanifold $B\subseteq J^r_0(\bbR^d,P)$ and the associated submanifold
\[f^*A[B]\subseteq J^r(f^*A,P)\]
we might ask whether $j^r(g|_{f^*A})$ is transverse to $f^*A[B]$. To state the theorem we make the following notation: for a map $g:M\ra P$ we write $g\pitchfork B$ in place of $g_*\pitchfork B$ where again
\[g_*:J^r_{0,\imm}(\bbR^d,M)\lra J^r_0(\bbR^d,P).\]
This condition is satisfied e.g.~by all submersions. Also consider the following map
{\begin{align} \label{eqn_definition_of_kappa}
\kappa:J^{r+s}(M,N)\times_M J^r_{0,\imm}(\bbR^d,M) & \lra J^r_0(\bbR^d,J^s(M,N)), \\
(j^{r+s}_x\varphi,j^r_0\psi) & \longmapsto j^r_0(j^s(\varphi)\circ\psi). \nonumber
\end{align}}%

{\renewcommand{\thetheorem}{C}
\begin{theorem} \label{theorem_main}
Let
\[\xymatrix{
M \ar[r]^{f} \ar[d]^{g} & N \\
P
}\]
be a pair of smooth maps. Let $A\subseteq J^s(M,N)$ and $B\subseteq J^r_0(\bbR^d,P)$ be smooth submanifolds with $d=\dim M-\codim A$. Assuming that $B$ is $\Diff(\bbR^d,0)$-invariant, $\kappa\pitchfork J^r_0(\bbR^d,A)$ and $g\pitchfork B$ the subset
\[\mfX:=\bigl\{f\in C^\infty(M,N)\ \bigl|\ j^sf\pitchfork A,\ j^r(g|_{f^*A})\pitchfork f^*A[B]\bigr\}\]
is residual in $C^\infty(M,N)$ with the strong topology. If either $r=0$ or $s=0$ the transversality condition $\kappa\pitchfork J^r_0(\bbR^d,A)$ is automatically satisfied.
\end{theorem}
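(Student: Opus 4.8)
The plan is to split $\mfX$ into its two defining conditions, treating $j^sf\pitchfork A$ by Thom's theorem and $j^r(g|_{f^*A})\pitchfork f^*A[B]$ by Theorem \ref{theorem_general}, and then to intersect the two resulting residual sets. First I would invoke Theorem \ref{theorem_thom} to see that $\{f\mid j^sf\pitchfork A\}$ is residual; on this set $f^*A$ is a genuine submanifold of $M$ of dimension $d=\dim M-\codim A$, so the second condition is meaningful. Since a countable intersection of residual subsets of $C^\infty(M,N)$ is again residual, it then suffices to produce a residual set of maps $f$ with $j^sf\pitchfork A$ for which $j^r(g|_{f^*A})\pitchfork f^*A[B]$.

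Next I would pass to the chart description of the conclusion. By the Construction, $f^*A[B]=\Charts_{f^*A}\times_{\Diff(\bbR^d,0)}B$ is a local subbundle of $J^r(f^*A,P)$; since $\Charts_{f^*A}\to J^r_{0,\imm}(\bbR^d,f^*A)$ is a surjective submersion, a standard property of local bundles identifies $j^r(g|_{f^*A})\pitchfork f^*A[B]$ with $g_*|_{Y_f}\pitchfork B$, where $Y_f:=J^r_{0,\imm}(\bbR^d,f^*A)\subseteq J^r_{0,\imm}(\bbR^d,M)$ and $g_*$ is post-composition with $g$. The purpose of $\kappa$ is to display $Y_f$ as a jet preimage: for an immersion germ $\psi$ one has $j^r_0(j^sf\circ\psi)=\kappa\bigl(j^{r+s}_{\psi(0)}f,\,j^r_0\psi\bigr)$, so $Y_f=\bigl((j^sf)_*\bigr)^{-1}\!\bigl(J^r_0(\bbR^d,A)\bigr)$ with $(j^sf)_*\colon J^r_{0,\imm}(\bbR^d,M)\to J^r_0(\bbR^d,J^s(M,N))$ post-composition with $j^sf$. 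The hypothesis $g\pitchfork B$ makes $E:=g_*^{-1}(B)$ a submanifold, and the elementary lemma that $h|_Y\pitchfork Z\Leftrightarrow Y\pitchfork h^{-1}(Z)$ whenever $h\pitchfork Z$ rewrites $g_*|_{Y_f}\pitchfork B$ as $Y_f\pitchfork E$; the hypothesis $\kappa\pitchfork J^r_0(\bbR^d,A)$ makes $W:=\kappa^{-1}(J^r_0(\bbR^d,A))$ a submanifold, and a dimension count using $d=\dim M-\codim A$ shows that $Y_f$ has the codimension $(\codim A)\binom{d+r}{d}$ of a transverse preimage of $W$, so that $j^sf\pitchfork A$ alone already forces $Y_f$ to be smooth.

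I would then feed the surviving requirement $Y_f\pitchfork E$ into Theorem \ref{theorem_general}, with $D=\bbR^d$, with $Y$ the submanifold of $J^r_\imm(\bbR^d,M)$ consisting of immersion jets at \emph{all} source points that satisfy the constraint cutting out $E$ (a manifold by $g\pitchfork B$, with $\sigma_Y$ a submersion), and with $Z$ the submanifold of $J^r(\bbR^d,N)$ encoding, through $\kappa$, the $A$-constraint on source-$0$ jets (a manifold by $\kappa\pitchfork J^r_0(\bbR^d,A)$). Letting $Y$ range over all source points is exactly what makes $\sigma_Y$ a submersion, hence $\sigma_Y\pitchfork\sigma_Z$, while the source-$0$ part of ``$f_*|_Y\pitchfork Z$'' reproduces ``$Y_f\pitchfork E$''. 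Theorem \ref{theorem_general} then supplies a residual set of $f$ with $f_*|_Y\pitchfork Z$; intersecting it with the residual set from Thom's theorem and undoing the earlier reformulations shows $\mfX$ is residual. For the two degenerate cases: when $r=0$, $\kappa$ is (identified with) the bundle projection $J^s(M,N)\times_M J^0_{0,\imm}(\bbR^d,M)\to J^s(M,N)=J^0_0(\bbR^d,J^s(M,N))$, and when $s=0$, $\kappa$ is of the form $(\mathrm{pr},\,\text{submersion})$ because precomposition with the $r$-jet of an immersion is a submersion of jet spaces; either way $\kappa$ is a submersion, so $\kappa\pitchfork J^r_0(\bbR^d,A)$ holds for every $A$.

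The step I expect to be the genuine obstacle is this last reduction: choosing the triple $(D,Y,Z)$ so that ``$f_*|_Y\pitchfork Z$'' literally coincides with ``$Y_f\pitchfork E$'', and verifying $\sigma_Y\pitchfork\sigma_Z$. The difficulty is that the property ``$\psi$ is a chart of $f^*A$'' depends on $f$ through a mixture of source data (the jet of $\psi$) and target data (the jet of $f$ at $\psi(0)$) — the very entanglement that the map $\kappa$ is introduced to resolve — so one must arrange the construction carefully enough that this mixed condition becomes an honest transversality of one jet map against one submanifold, with codimensions and source maps all in agreement.
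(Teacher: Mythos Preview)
Your reduction of the second condition is essentially the paper's Lemma~3: via the chart description you correctly rewrite $j^r(g|_{f^*A})\pitchfork f^*A[B]$ as the transversality of $(j^sf)_*|_Y$ to $J^r_0(\bbR^d,A)$, where $Y=g_*^{-1}(B)\subseteq J^r_{0,\imm}(\bbR^d,M)$. Your treatment of the cases $r=0$ and $s=0$ is also fine.

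The genuine gap is exactly where you flagged it, but it is not just a matter of ``arranging the construction carefully'': there is \emph{no} submanifold $Z\subseteq J^r(\bbR^d,N)$ for which ``$f_*|_Y\pitchfork Z$'' in the sense of Theorem~\ref{theorem_general} reproduces the condition you need. The map $f_*$ in Theorem~\ref{theorem_general} is postcomposition $J^r(\bbR^d,M)\to J^r(\bbR^d,N)$, so $f_*(j^r_0\psi)=j^r_0(f\psi)$ depends only on the $r$-jet of $f$ along the $d$-dimensional image of $\psi$. By contrast $(j^sf)_*(j^r_0\psi)=\kappa(j^{r+s}_{\psi(0)}f,\,j^r_0\psi)$ depends on the full $(r{+}s)$-jet of $f$ at $\psi(0)$, including derivatives in directions transverse to $\im\psi$ (and $A\subseteq J^s(M,N)$ depends on $M$ as well as $N$). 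No choice of $Z$ in $J^{r'}(\bbR^d,N)$, for any $r'$, can encode this. Equivalently: applying Theorem~\ref{theorem_general} with target $J^s(M,N)$ would give residuality in $C^\infty(M,J^s(M,N))$, not in the holonomic sections.

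The paper's resolution is not to apply Theorem~\ref{theorem_general} as a black box but to rerun its proof with the parameter space replaced: instead of $Q=J^r_*(\bbR^m,J^s(\bbR^m,\bbR^n))$ one uses the holonomic subspace $J^{r+s}_*(\bbR^m,\bbR^n)$, acting by $q\mapsto f_0+\lambda q$ on $C^\infty(M,N)$. The linear part of the resulting $\gamma$ is then identified with $\kappa$, and the hypothesis $\kappa\pitchfork J^r_0(\bbR^d,A)$ is precisely what replaces the surjectivity of the precomposition map~(\ref{eqn_precomposition}). So the role of $\kappa\pitchfork J^r_0(\bbR^d,A)$ is not to make some $Z$ a manifold, as you suggest, but to guarantee that the restricted family of perturbations (polynomials in $C^\infty(M,N)$ rather than in $C^\infty(M,J^s(M,N))$) is still rich enough for the parametric transversality argument to go through.
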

\addtocounter{theorem}{-1}}

\begin{remark}
The transversality condition $\kappa\pitchfork J^r_0(\bbR^d,A)$ cannot be removed as we illustrate by an example in the next section. An interesting question arises whether there is a reasonable sufficient condition on $A$ for which the transversality is automatic (an example of such is e.g.~$s=0$).
\end{remark}

\begin{remark}
It is also possible to state conditions under which $\mfX$ is open.
\end{remark}

%\begin{remark}
%Suppose that $A$ is in fact an associated bundle to a $\Diff(\bbR^m,0)$-invariant submanifold of the same name $A\subseteq %J^s_0(\bbR^m,N)$ so that our old $A$ now becomes $M[A]$. Then the transversality condition $\kappa\pitchfork %J^r_0(\bbR^d,M[A])$ reads: the map
%{\begin{align*}
%J^{r+s}_0(\bbR^m,N) & \lra J^r_0(\bbR^d,J^s(\bbR^m,N)) \\
%j^{r+s}_0\psi & \longmapsto j^r_0(j^s(\psi)|_{\bbR^d})
%\end{align*}}%
%is transverse to $J^r_0(\bbR^d,A)$. This can be easily expressed via derivatives using a chart on $N$: suppose that $A$ is %described (as the preimage of 0) by a locally defined submersion
%\[p:J^s_0(\bbR^m,\bbR^n)\ra\bbR^{m-d}\]
%Then the transversality condition is: the collection of maps
%{\begin{align*}
%\partial_Ip:J^{r+s}_0(\bbR^m,\bbR^n) & \lra \bbR^{m-d} \\
%j^{r+s}_0(f) & \longmapsto p(j^s_0(\partial^{|I|}f/\partial x^I))
%\end{align*}}%
%as $I$ is running over all multiindices on $\bbR^d$ with $|I|\leq r$, is independent at the common preimage of 0.
%\end{remark}

\section{An example}
We describe here a family of examples. For $k=1$ (and partially for $k=2$) Theorem~\ref{theorem_main} can be applied whereas for $k\geq 3$ the theorem fails due to $\kappa\not\pitchfork J^r_0(\bbR^d,A)$. We will not be interested in a particular choice of $B$ since the transversality condition in question depends only on $A$. Let us start with the following diagram
\[\xymatrix{
D^k\times M \ar[r]^-f \ar[d] & \bbR \\
D^k
}\]
where we think of $f$ as a family of functions $M\ra\bbR$ parametrized by a disc $D^k$. To fit into our situation we should replace $D^k$ by $\bbR^k$ but the boundary is not the issue in our example. Let $s=1$ and let $A\subseteq J^1(D^k\times M,\bbR)$ denote the subspace of all those jets which have zero derivative in the direction of $M$. Clearly $A$ has codimension $\dim M$ and thus $d=k$.

For a function $\varphi:D^k\times M\ra\bbR$ we have $j^1_{(x,y)}\varphi\in A$ iff the composition
\[\xymatrix{
T_yM \ar@{c->}[r] & T_xD^k\times T_yM \ar[r]^-{\mrd\varphi} & \bbR
}\]
is zero. We express this by saying that the map
\[\mrd|_{TM}:J^1(D^k\times M,\bbR)\ra T^*M\]
describes $A$ as the preimage of 0. We will now show how to describe $J^r_0(\bbR^k,A)$ (or $f^*A$ in fact) in a similar way. Compose the defining equation with an immersion $\psi:\bbR^k\ra D^k\times M$ as in (\ref{eqn_definition_of_kappa}) and differentiate to get
\[\xymatrix@C=25pt{
T_yM\otimes\bbR^k \ar@{c->}[rr]^-{incl\otimes\psi_*} & & S^2(T_xD^k\times T_yM) \ar[r]^-{\mrd^2\varphi} & \bbR
}.\]
Differentiating further and putting all the maps together we get a single map
\[J^{r+1}(D^k\times M,\bbR)\times_M J^r_{0,\imm}(\bbR^k,D^k\times M)\xlra{\chi}T^*M\otimes(\bbR\oplus\bbR^k\oplus\cdots\oplus S^r\bbR^k)^*\]
describing $f^*A$. Moreover $\chi$ is a submersion at $f^*A$ iff $\kappa\pitchfork J^r_0(\bbR^k,A)$. Fixing $j^r_0\psi$ the $T^*M\otimes(S^i\bbR^k)^*$-coordinate is a sum of various restrictions of the derivatives of $\varphi$ with only a single term involving the highest derivative $\mrd^{i+1}\varphi$, namely
\[\xymatrix@C=30pt{
T_yM\otimes S^i\bbR^k \ar@{c->}[rr]^-{incl\otimes S^i\psi_*} & & S^{i+1}(T_xD^k\times T_yM) \ar[r]^-{\mrd^{i+1}\varphi} & \bbR
}.\]
This implies that if the image of the derivative of $\psi_*$ at 0 intersects $TM$ in a subspace of dimension at most 1, the highest order term is a surjective linear map and quite easily the whole map $\chi$ is a submersion (even for a fixed $\psi$). The opposite implication also holds as considerations at pairs with $j^{r+1}\varphi=0$ show.

For $k=1$ we have shown that $\kappa\pitchfork J^r_0(\bbR,A)$ and therefore for a generic map $f$ the fibrewise singularity set $\Sigma_f=f^*A$ is a 1-dimensional submanifold for which the projection $\Sigma_f\ra D^1$ is generic. This means that only regular points and folds appear% (over different points in $D^1$ if one likes)
. The regular points of the projection are exactly the fibrewise Morse singularities of $f$ while folds correspond to the fibrewise $A_3$-singularities (those of the form $x_1^3\pm x_2^2\pm\cdots\pm x_m^2$).

For $k\geq 2$ it is \emph{not} the case that $\kappa\pitchfork J^r_0(\bbR^k,A)$, namely at pairs $(j^{r+s}_x\varphi,j^r_0\psi)$ where the image of the tangential map $\psi_*$ at 0 intersects the tangent space of the fibre $M$ in a subspace of dimension at least 2.

Nevertheless for $k=2$ the conclusion of Theorem~\ref{theorem_main} still holds. This is because the condition $\kappa\pitchfork J^r_0(\bbR^k,A)$ is only required at those $(j^{r+s}_x\varphi,j^r_0\psi)$ for which $\psi(\bbR^d)\subseteq f^*A$ and for a generic map $f$ such $\psi$ cannot be tangent to the fibre $M$ (if that was the case then the rank of the fibrewise Hessian of $f$ at $x$ would drop by 2 and this does not happen generically). Therefore again $\Sigma_f$ is a 2-dimensinal submanifold of $D^2\times M$ partitioned into three parts: the cusps of the projection $\Sigma_f\ra D^2$ (where $f$ attains an isolated fibrewise $A_4$-singularity); the folds form a 1-dimensional submanifold (where the fibrewise $A_3$-singularities occur); and the fibrewise Morse singularities.

For $k=3$ the conclusion of Theorem~\ref{theorem_main} fails. This is due to the fact that for a generic map between 3-manifolds the rank drops at most by 1 whereas for a generic 3-parameter family of functions $M\ra\bbR$ the fibrewise Hessian can drop rank by 2.

\section{Proofs}

First we will show how to translate transversality conditions for a restriction of a fixed map $g:M\ra P$ to the preimage $f^{-1}(A)$ of a submanifold along a map $f:M\ra N$ in terms of transversality conditions on $f$ itself. At the end we prove that such properties are generic in the sense that maps satisfying them form a residual subset of $C^\infty(M,N)$. First version does not involve any jet conditions.

\begin{lemma} \label{lemma_equivalence1}
Let
\[\xymatrix{
& f^{-1}(A) \ar[r] \ar@{c->}[d]^-j & A \ar@{c->}[d] \\
g^{-1}(B) \ar@{c->}[r]^-i \ar[d] & M \ar[r]^{f} \ar[d]^{g} & N \\
B \ar@{c->}[r] & P
}\]
be a diagram where we assume that $f\pitchfork A$ and $g\pitchfork B$. Then the following conditions are equivalent:
\begin{enumerate}
\item[(i)]{$gj\pitchfork B$,}
\item[(ii)]{$fi\pitchfork A$,}
\item[(iii)]{$f^{-1}(A)\pitchfork g^{-1}(B)$.}
%\item[(iv)]{$(f,g)\pitchfork A\times B$ where $(f,g):P\rightarrow M\times N$}
\end{enumerate}
\end{lemma}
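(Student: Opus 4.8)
The plan is to prove the three equivalences by unwinding the definition of transversality at each point and exploiting the symmetry of the diagram. Fix a point $p\in f^{-1}(A)\cap g^{-1}(B)$ (equivalently, a point $p\in M$ with $f(p)\in A$ and $g(p)\in B$; if no such point exists all conditions hold vacuously at the relevant locus, so we may restrict attention to such $p$). Write $a=f(p)$, $b=g(p)$. Since $f\pitchfork A$, the submanifold $f^{-1}(A)\subseteq M$ has tangent space $T_p f^{-1}(A)=(df_p)^{-1}(T_aA)$, and similarly $T_p g^{-1}(B)=(dg_p)^{-1}(T_bB)$ using $g\pitchfork B$.

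The first step is to record what each condition says infinitesimally at $p$. Condition (i), $gj\pitchfork B$, says $d(gj)_p(T_p f^{-1}(A))+T_bB=T_bP$, i.e. $dg_p\bigl((df_p)^{-1}(T_aA)\bigr)+T_bB=T_bP$. Condition (ii), $fi\pitchfork A$, says symmetrically $df_p\bigl((dg_p)^{-1}(T_bB)\bigr)+T_aA=T_aN$. Condition (iii), $f^{-1}(A)\pitchfork g^{-1}(B)$, says $(df_p)^{-1}(T_aA)+(dg_p)^{-1}(T_bB)=T_pM$. So everything reduces to a purely linear-algebra statement about two linear maps $\alpha=df_p:V\to V'$, $\beta=dg_p:V\to V''$ and subspaces $W'\subseteq V'$ with $\alpha(V)+W'=V'$ (from $f\pitchfork A$), $W''\subseteq V''$ with $\beta(V)+W''=V''$ (from $g\pitchfork B$): the three assertions
\[
\beta\bigl(\alpha^{-1}(W')\bigr)+W''=V'',\qquad
\alpha\bigl(\beta^{-1}(W'')\bigr)+W'=V',\qquad
\alpha^{-1}(W')+\beta^{-1}(W'')=V
\]
are equivalent.

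The second step is to prove this linear equivalence. I would show (iii)$\Rightarrow$(i) and (iii)$\Rightarrow$(ii) directly: apply $\beta$ to $\alpha^{-1}(W')+\beta^{-1}(W'')=V$ to get $\beta(\alpha^{-1}(W'))+\beta(\beta^{-1}(W''))=V''$, and since $\beta(\beta^{-1}(W''))\subseteq W''$ while $\beta(V)+W''=V''$ forces $\beta(\beta^{-1}(W''))+W''=W''$... more carefully, $\beta(\beta^{-1}(W''))=W''\cap\beta(V)$, so $\beta(\alpha^{-1}(W'))+W''=\beta(\alpha^{-1}(W'))+(W''\cap\beta(V))+W''=V''$, giving (i); (ii) is symmetric. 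For the converse (i)$\Rightarrow$(iii): given $\beta(\alpha^{-1}(W'))+W''=V''$, take any $v\in V$; then $\beta(v)=\beta(w_1)+w_2$ with $w_1\in\alpha^{-1}(W')$, $w_2\in W''$, and since $\beta(V)+W''=V''$ we may... actually the cleanest route is to check that $v-w_1\in\beta^{-1}(W'')$ up to an element of $\ker\beta$, but $\ker\beta\subseteq\beta^{-1}(W'')$, so $v-w_1\in\beta^{-1}(W'')$ outright, whence $v=w_1+(v-w_1)\in\alpha^{-1}(W')+\beta^{-1}(W'')$, giving (iii). Symmetrically (ii)$\Rightarrow$(iii). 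This closes the cycle.

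I expect the main (very mild) obstacle to be purely bookkeeping: making sure the transversality hypotheses $f\pitchfork A$ and $g\pitchfork B$ are used exactly where needed to identify the tangent spaces of the preimages and to replace $\beta(\beta^{-1}(W''))$ by something usable, and confirming that the set-theoretic intersections $f^{-1}(A)\cap g^{-1}(B)$ and $\bigl(f|_{g^{-1}(B)}\bigr)^{-1}(A)$ etc. all coincide as subsets of $M$ so that the "at every common point'' quantifiers in the three conditions range over the same set of points. Once the reduction to linear algebra is in place, the equivalence is immediate and symmetric, so no genuine difficulty remains.
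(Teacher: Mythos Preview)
Your argument is correct: the reduction to linear algebra at a common point $p$ is exactly right, and your element-chase proving the three linear statements equivalent goes through (the detour about $\ker\beta$ in (i)$\Rightarrow$(iii) is unnecessary, since $\beta(v-w_1)=w_2\in W''$ already gives $v-w_1\in\beta^{-1}(W'')$ directly, but no harm done).

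The paper's proof rests on the same reduction but packages the linear algebra more efficiently. Rather than manipulating sums of subspaces, it passes to quotients: transversality $g\pitchfork B$ says precisely that the induced map
\[
T_xM/T_xg^{-1}(B)\lra T_{g(x)}P/T_{g(x)}B
\]
is an isomorphism. Once you phrase (i) as surjectivity of $T_xf^{-1}(A)\to T_{g(x)}P/T_{g(x)}B$ and (iii) as surjectivity of $T_xf^{-1}(A)\to T_xM/T_xg^{-1}(B)$, the equivalence (i)$\Leftrightarrow$(iii) is a single commutative triangle, and (ii)$\Leftrightarrow$(iii) follows by the symmetry you already noted. This quotient viewpoint avoids the fiddling with $\beta(\beta^{-1}(W''))=W''\cap\beta(V)$ and the separate implications; it also makes transparent \emph{where} each transversality hypothesis enters (exactly in identifying the normal space of the preimage with the normal space of the target submanifold). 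Your approach has the virtue of being completely elementary and self-contained, but the normal-space formulation is worth internalizing since it recurs constantly in transversality arguments.
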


\begin{proof} Since (iii) is symmetric it is enough to show the equivalence of (i) and (iii). But (i) is equivalent to the map
\[g_*:T_x f^{-1}(A)\longrightarrow T_{g(x)}P/T_{g(x)}B\]
induced by the derivative of $g$ being surjective for every $x\in f^{-1}(A)\cap g^{-1}(B)$. Because of the assumption $g\pitchfork B$, we have a commutative diagram
\[\xymatrix{
T_x f^{-1}(A) \ar[r] \ar[d] & T_{g(x)}P/T_{g(x)}B \\
T_x M/T_x g^{-1}(B) \ar[ru]_\cong
}\]
and so (i) is equivalent to (iii).
%
%Let us deal with (iv) now (which we actually do not need in the following). It can be phrased as surjectivity of the map %(induced by $(f,g)$)
%\[\varphi:T_xP\longrightarrow (T(M\times N)/T(A\times B))_{(f(x),g(x))}\cong (TM/TA)_{f(x)}\oplus(TN/TB)_{g(x)}\]
%for every $x\in f^{-1}(A)\cap g^{-1}(B)$. Assuming (i) and (ii) the image under $\varphi$ of $T_xf^{-1}(A)$ is $0\oplus(TN/TB)_{g(x)}$ and the image of $T_xg^{-1}(B)$ is $(TM/TA)_{f(x)}\oplus 0$ and so $\varphi$ is surjective. Assuming the surjectivity of $\varphi$ on the other hand we can find $u\in T_xP$ mapping to $(0,v)$ by $\varphi$ for any choice of $v$. Necessarily $u\in T_xf^{-1}(A)$ and so the map
%\[T_x f^{-1}(A)\longrightarrow T_{g(x)}N/T_{g(x)}B\]
%is surjective. This is condition (i).
\end{proof}

Now we will generalize the lemma to jet transversality conditions. Let us recall that for a $\Diff(\bbR^d,0)$-invariant submanifold $B\subseteq J^r_0(\bbR^d,P)$ we have constructed an associated subbundle $D[B]\subseteq J^r(D,P)$ for any $d$-dimensional manifold $D$.

\begin{lemma} \label{lemma_equivalence2}
For $h:D\rightarrow P$ the following conditions are equivalent
\begin{enumerate}
\item[(i)]{$h_*:J^r_{0,\imm}(\bbR^d,D)\rightarrow J^r_0(\bbR^d,P)$ is transverse to $B$,}
\item[(ii)]{$j^r(h):D\rightarrow J^r(D,P)$ is transverse to $D[B]$.}
\end{enumerate}
\end{lemma}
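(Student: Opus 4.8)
The plan is to reduce the statement to a pointwise, frame-by-frame computation using the local structure of the associated bundle $D[B]$ and the description of $j^r(h)$ coming from the chart bundle $\Charts_D$. First I would recall from the Construction that $J^r(D,P) \cong \Charts_D \times_{\Diff(\bbR^d,0)} J^r_0(\bbR^d,P)$ via $[u,\alpha]\mapsto\alpha\circ j^r_{u(0)}(u^{-1})$, and that under this identification $D[B]$ is exactly the subbundle $\Charts_D\times_{\Diff(\bbR^d,0)} B$. The key observation is that for $u\in\Charts_D$ with $u(0)=p$, the jet $j^r_p(h)$ corresponds to $[u, j^r_0(h\circ u)]$; equivalently, $j^r_0(h\circ u) = h_*(j^r_0 u)$ where $j^r_0 u\in J^r_{0,\imm}(\bbR^d,D)$ is an invertible (hence immersive) jet. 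So the map $h_*$ restricted to the fibre $\Charts_D\subseteq J^r_{0,\imm}(\bbR^d,D)$ over $p$ is, up to the $\Diff(\bbR^d,0)$-quotient, nothing but $j^r(h)$ read in a chart.

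The second ingredient I would make precise is how transversality interacts with the two "extra directions" present in $h_*$ but not in $j^r(h)$: namely, moving within a fibre of $\Charts_D\to D$ (i.e.\ reparametrizing the source) and, more generally, moving along non-invertible immersive jets in $J^r_{0,\imm}(\bbR^d,D)$. The crucial point is that $B$ is $\Diff(\bbR^d,0)$-invariant, so the derivative of $h_*$ in the reparametrization directions always lands inside $TB$ and contributes nothing to surjectivity onto the normal space $T(J^r_0(\bbR^d,P))/TB$. For the remaining immersive-but-not-invertible jets one uses that any immersive jet $j^r_0\psi$ factors (to order $r$) through an invertible jet followed by a linear inclusion $\bbR^d\hookrightarrow\bbR^{d'}$... but here $D$ is exactly $d$-dimensional, so in fact every immersive jet $\bbR^d\to D$ with source $0$ of an immersion $\psi$ has $\psi$ a local diffeomorphism near $0$, i.e.\ $J^r_{0,\imm}(\bbR^d,D) = \Charts_D$ already (after possibly restricting to jets that are invertible to first order — an immersion of $\bbR^d$ into a $d$-manifold is a local diffeomorphism). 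This collapses the space of sources to a single $\Diff(\bbR^d,0)$-orbit worth of fibres, and the equivalence becomes: $h_*\pitchfork B$ on $\Charts_D$ $\iff$ for each $p$ and each chart $u$ at $p$, $d(h\circ u)$ hits the normal of $B$ at $j^r_0(h\circ u)$ $\iff$ $j^r(h)\pitchfork D[B]$ at $p$, the last step being just the chart description of $D[B]$ and the chain rule.

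The main obstacle I anticipate is bookkeeping the derivative of $h_*$ carefully enough to see the clean splitting "(reparametrization directions) $\oplus$ (base directions $D$)" matching "$TB$ $\oplus$ (directions seen by $j^r(h)$)", and in particular verifying that the map $J^r_{0,\imm}(\bbR^d,D)\to D$ (source evaluation composed with $\tau$) is a submersion with fibres precisely the $\Diff(\bbR^d,0)$-orbits, so that transversality on the total space is equivalent to fibrewise transversality plus the base contribution — which, since $j^r(h)$ is a section over $D$, is automatic. Concretely I would write, at a point $x\in D$ lying over the relevant locus, the tangent space $T_{j^r_0\psi}J^r_{0,\imm}(\bbR^d,D)$ as an extension with quotient $T_xD$ (the source point moves) and kernel the reparametrization jets, push it forward by $h_*$, and compare with the pushforward of $T_xD$ by $j^r(h)$ together with the $\Diff(\bbR^d,0)$-action on $B$; the invariance of $B$ makes the kernel part land in $TB$, so surjectivity onto the normal bundle of $B$ is detected exactly on the $T_xD$-summand, which is the statement (ii). Everything else is the chain rule $j^r_0(h\circ u\circ\rho) = (h\circ u)_* (j^r_0\rho)$ already packaged in the definition of $f_*$ and of the local bundle $D[B]$.
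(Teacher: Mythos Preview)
Your argument is correct, and it rests on the same key fact as the paper's proof (the $\Diff(\bbR^d,0)$-invariance of $B$ kills the ``extra'' reparametrization directions), but the organization is genuinely different. You work infinitesimally: you stay with $h_*:J^r_{0,\imm}(\bbR^d,D)\to J^r_0(\bbR^d,P)$, split the tangent space of the domain at each invertible jet into a vertical piece (the $\Diff(\bbR^d,0)$-orbit direction) and a horizontal piece projecting to $T_xD$, observe that the vertical piece lands in $TB$ by invariance, and then match the horizontal contribution with $d(j^r h)$ via a chart. The paper instead first takes associated bundles to replace (i) by the transversality of $h_*:J^r_\imm(D,D)\to J^r(D,P)$ to $D[B]$, and then argues \emph{globally}: the $k_*$-action by local diffeomorphisms of $D$ commutes with $h_*$ and preserves $D[B]$, so it suffices to check transversality at the identity jets $j^r_y(\id)$; moreover the whole fibre of the target map $\tau:J^r_\imm(D,D)\to D$ over such a $y$ is sent into $D[B]$, hence transversality along the section $j^r(\id)$ --- which composed with $h_*$ is exactly $j^r(h)$ --- already decides everything. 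In short, the paper uses the integrated symmetry (fibres go to $D[B]$, reduce to a section) where you use its derivative (tangent to fibres goes to $TB$, split the tangent space). The paper's route avoids choosing horizontal lifts and writing out the chain-rule identification explicitly; yours is more direct and makes the normal-bundle comparison visible. One small wording slip: the relevant map $J^r_{0,\imm}(\bbR^d,D)\to D$ is the \emph{target} map $j^r_0\psi\mapsto\psi(0)$, not a source evaluation.
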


\begin{proof}
Taking associated bundles (i) is clearly equivalent to the transversality of
\[h_*:J^r_\imm(D,D)\rightarrow J^r(D,P)\]
to $D[B]$. Let $j^r_x(k)\in J^r_{x,\imm}(D,D)_y$ be an $r$-jet of a diffeomorphism $k:V\rightarrow W$ between open subsets of $D$. Then we have a diagram
\[\xymatrix{
J^r_{\imm}(V,D) \ar[r]^{h_*} \ar[d]_{\cong}^{k_*} & J^r(V,P) \ar[d]_{\cong}^{k_*} & V[B] \ar@{d->}[l] \ar[d]_{\cong}^{k_*}\\
J^r_{\imm}(W,D) \ar[r]^{h_*} & J^r(W,P) & W[B] \ar@{d->}[l]
}\]
Now $j^r_x(k)$ in the top left corner is mapped by $k_*$ down to $j^r_y(\id)$. Hence we see that it is enough (equivalent) to check the transversality only at $j^r_y(\id)$'s for all $y\in D$ for which $h_*(j^r_y(\id))=j^r_y(h)\in D[B]$. For such $y$ the same diagram shows that every $j^r_x(k)$ with target $y$ is mapped by $h_*$ to $D[B]$. Thus the whole fibre over $y$ of the target map
\[J^r_\imm(D,D)\xlra{\tau} D\]
is mapped to $D[B]$. The target map $\tau$ has a section
\[j^r(\id):D\rightarrow J^r_\imm(D,D)\]
and so (i) is finally equivalent to the composite
\[D\xlra{j^r(\id)}J^r_\imm(D,D)\xlra{h_*}J^r(D,P)\]
being transverse to $D[B]$. This is (ii).
\end{proof}

We say that a map $g:M\rightarrow P$ is transverse to a $\Diff(\bbR^d,0)$-invariant submanifold $B\subseteq J^r_0(\bbR^d,P)$, denoted $g\pitchfork B$, if
\[g_*:J^r_{0,\imm}(\bbR^d,M)\rightarrow J^r_0(\bbR^d,P)\]
%(\textbf{or should one not put an `imm' index to the first term?})
is transverse to $B$. When $r=0$ this is equivalent to the usual transversality of a map to a submanifold. Let $f\pitchfork A$ where $f:M\rightarrow N$ and $A\subseteq N$ is a submanifold. Then we have the following diagram
\[\xymatrix{
J^r_{0,\imm}(\bbR^d,f^{-1}(A)) \ar@{c->}[r] \ar[d]^{j_*} & J^r_0(\bbR^d,f^{-1}(A)) \ar[r] \ar[d]^{j_*} & J^r_0(\bbR^d,A) \ar[d] \\
J^r_{0,\imm}(\bbR^d,M) \ar@{c->}[r] & J^r_0(\bbR^d,M) \ar[r]^-{f_*} & J^r_0(\bbR^d,N)
}\]
where both squares are transverse pullbacks. This can be easily seen in local coordinates. Combining Lemma~\ref{lemma_equivalence1} with Lemma~\ref{lemma_equivalence2} we get:

\begin{lemma} \label{lemma_equivalence3}
Given a diagram
\[\xymatrix{
f^{-1}(A) \ar[r] \ar@{c->}[d]^-j & A \ar@{c->}[d] \\
M \ar[r]^{f} \ar[d]^{g} & N \\
P
}\]
assume that $f\pitchfork A$ and $g\pitchfork B$, where $B\subseteq J^r_0(\bbR^d,P)$ is a $\Diff(\bbR^d,0)$-invariant submanifold with $d=\dim M+\dim A-\dim N$. Then the following conditions are equivalent:
\begin{enumerate}
\item[(i)]{$j^r(gj)\pitchfork (f^{-1}(A))[B]$, where
\[j^r(gj):f^{-1}(A)\rightarrow J^r(f^{-1}(A),P)\]
is the jet prolongation,}
\item[(ii)]{$f_*|_Y\pitchfork J^r_0(\bbR^d,A)$}, where $Y=(g_*)^{-1}(B)$ is defined by a pullback diagram
\[\xymatrix{
Y \ar@{c->}[r] \ar[d] & J^r_{0,\imm}(\bbR^d,M) \ar[d]^{g_*} \\
B \ar@{c->}[r] & J^r_0(\bbR^d,P)
}\]
\end{enumerate}
\end{lemma}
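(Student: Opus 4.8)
The plan is to obtain the equivalence by stringing together the two preceding lemmas with the transverse-pullback diagram recorded just above the statement; once the roles of the manifolds and maps are matched up the argument is essentially formal, so I will mostly keep track of the bookkeeping.

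First I would eliminate the jet prolongation from (i). Since $f\pitchfork A$, we have $\dim f^{-1}(A)=\dim M+\dim A-\dim N=d$, so Lemma~\ref{lemma_equivalence2} applies with $D=f^{-1}(A)$ and $h=gj$, and it rewrites (i) as
\[(\star)\qquad (gj)_*\colon J^r_{0,\imm}(\bbR^d,f^{-1}(A))\lra J^r_0(\bbR^d,P)\quad\textrm{is transverse to}\quad B.\]
Next I would invoke the diagram above the statement: pasting its two transverse-pullback squares shows that the outer rectangle is also a transverse pullback, i.e.\ that $J^r_{0,\imm}(\bbR^d,f^{-1}(A))$ is the transverse preimage $(f_*)^{-1}(J^r_0(\bbR^d,A))$ formed \emph{inside} $J^r_{0,\imm}(\bbR^d,M)$, with $j_*$ as the inclusion. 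In particular the restricted source map $f_*\colon J^r_{0,\imm}(\bbR^d,M)\to J^r_0(\bbR^d,N)$ is transverse to $J^r_0(\bbR^d,A)$, and $(gj)_*=g_*\circ j_*$.

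With these identifications in hand I would feed the diagram
\[\xymatrix{
& (f_*)^{-1}(J^r_0(\bbR^d,A)) \ar[r] \ar@{c->}[d]^-{j_*} & J^r_0(\bbR^d,A) \ar@{c->}[d] \\
Y \ar@{c->}[r] \ar[d] & J^r_{0,\imm}(\bbR^d,M) \ar[r]^-{f_*} \ar[d]^{g_*} & J^r_0(\bbR^d,N) \\
B \ar@{c->}[r] & J^r_0(\bbR^d,P)
}\]
into Lemma~\ref{lemma_equivalence1}, with $J^r_{0,\imm}(\bbR^d,M)$, $J^r_0(\bbR^d,N)$, $J^r_0(\bbR^d,P)$, $J^r_0(\bbR^d,A)$ and $B$ in the roles of $M$, $N$, $P$, $A$ and $B$ there. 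Its two standing hypotheses then hold: ``$f\pitchfork A$'' becomes $f_*\pitchfork J^r_0(\bbR^d,A)$ (just noted) and ``$g\pitchfork B$'' becomes $g_*\pitchfork B$, which is precisely the hypothesis $g\pitchfork B$; moreover $Y=(g_*)^{-1}(B)$ is exactly the pullback displayed in (ii). Condition (i) of Lemma~\ref{lemma_equivalence1} now reads $g_*\circ j_*\pitchfork B$, which is $(\star)$, and its condition (ii) reads $f_*|_Y\pitchfork J^r_0(\bbR^d,A)$, which is condition (ii) of the present lemma. Chaining this with the first step, $\textrm{(i)}\Leftrightarrow(\star)\Leftrightarrow\textrm{(ii)}$.

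The one point that is more than formal, and which I would actually spell out, is the identification used in the second step: that the transverse preimage $(f_*)^{-1}(J^r_0(\bbR^d,A))$ inside $J^r_{0,\imm}(\bbR^d,M)$ is indeed $J^r_{0,\imm}(\bbR^d,f^{-1}(A))$ with $j_*$ as inclusion, and that restricting to the open subset of immersion-jets is compatible with forming this pullback. This is the verification ``in local coordinates'' alluded to before the statement; the dimension hypothesis $d=\dim M+\dim A-\dim N$ is precisely what makes it dimensionally consistent and what licenses the appeal to Lemma~\ref{lemma_equivalence2} in the first step.
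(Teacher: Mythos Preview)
Your proposal is correct and follows essentially the same route as the paper's proof: apply Lemma~\ref{lemma_equivalence1} to the jet-level diagram (using the transverse-pullback identification $J^r_{0,\imm}(\bbR^d,f^{-1}(A))\cong(f_*)^{-1}(J^r_0(\bbR^d,A))$ stated just before the lemma) to pass between (ii) and the intermediate condition $(gj)_*\pitchfork B$, and apply Lemma~\ref{lemma_equivalence2} with $D=f^{-1}(A)$, $h=gj$ to pass between that and (i). The only cosmetic difference is that you invoke the two lemmas in the opposite order.
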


\begin{proof} Applying Lemma~\ref{lemma_equivalence1} to the diagram
\[\xymatrix{
& J^r_{0,\imm}(\bbR^d,f^{-1}(A)) \ar[r] \ar[d]^{j_*} & J^r_0(\bbR^d,A) \ar[d] \\
Y \ar@{c->}[r] \ar[d] & J^r_{0,\imm}(\bbR^d,M) \ar[r]^-{f_*} \ar[d]^{g_*} & J^r_0(\bbR^d,N) \\
B \ar@{c->}[r] & J^r_0(\bbR^d,P)
}\]
gives an equivalence of (ii) with the transversality of
\[(gj)_*:J^r_{0,\imm}(\bbR^d,f^{-1}(A))\longrightarrow J^r_0(\bbR^d,P)\]
to $B$. By Lemma~\ref{lemma_equivalence2} this is equivalent to (i).
\end{proof}

Now that we know how $f$ controls the transversality of a map
defined on the preimage $f^{-1}(A)$ of some submanifold, we would
like to see that this transversality condition (any of the two
equivalent conditions in Lemma~\ref{lemma_equivalence3}) is generic.
This is indeed the case. We first prove a more general result
which at the same time happens to generalize the Thom
Transversality Theorem.

%{\renewcommand{\thetheorem}{A}
%\begin{theorem} \label{theorem_general}
%Let $D$, $M$, $N$ be manifolds, $Y\subseteq J^r_\imm(D,M)$ and $Z\subseteq J^r(D,N)$ submanifolds. Let us further assume that %$\sigma_Y\pitchfork\sigma_Z$, where 
%\[\sigma_Y=\sigma|_Y:Y\subseteq J^r(D,M)\rightarrow D\quad\textrm{and}\quad \sigma_Z=\sigma|_Z:Z\subseteq J^r(D,N)\rightarrow %D\]
%are the restrictions of the source maps. For a smooth map $f:M\rightarrow N$ let $f_*|_Y$ denote the map
%\[Y\subseteq J^r_\imm(D,M)\xlra{f_*} J^r(D,N).\]
%Then the set
%\[\mfX:=\bigl\{f\in C^\infty(M,N)\ \bigl|\ f_*|_Y\pitchfork Z\bigr\}\]
%is residual in $C^\infty(M,N)$ with the strong topology, and open if $Z$ is closed (as a subset) and $\tau_Y:Y\rightarrow M$ %proper.
%\end{theorem}
%\addtocounter{theorem}{-1}}

\begin{proof}[Proof of Theorem~\ref{theorem_general}] This is an application of Lemma~\ref{lemma_general_transversality} from Section~\ref{section_general_transversality}. We have a map
\[\alpha:C^\infty(M,N)\rightarrow C^\infty(Y,J^r(D,N))\]
sending $f$ to $f_*|_Y$. This map is continuous for the weak topology on the target and clearly $\mfX=\{f\in C^\infty(M,N)\ |\ \alpha(f)\pitchfork Z\}$. We have to verify the conditions of Lemma~\ref{lemma_general_transversality}.

\[\xy 0;/r10mm/:
0,{\ellipse(2,1){-}},
(1.4,0)="a";
(0,.7)**\dir{-};
(-1.4,0)**\dir{-};(-2,.8)*!DR{\tau(K)};c**\dir{-}*\dir{>};
(0,-.7)**\dir{-};"a"**\dir{-},
(0,1);p+(0,.5)**\dir{-},*+!UL!D(.2){f_0^{-1}(V)},(0,-1),{\ellipse`(-2,0){-}},
(\halfroottwo,\halfroottwo)+(\halfroottwo,0);p+(0,.5)**\dir{-}*+!UL!D(.2){f_0^{-1}(\tau(L))},
(0,0)**\dir{-};(\halfroottwo,-\halfroottwo)+(\halfroottwo,0)**\dir{-},
(.7,0),{\ellipse(.9,.5){-}},(1.6,0);(2.5,0)*+!L{W};c**\dir{-}*\dir{>},
(-2,0),*+!R{U},
\endxy\]

Let $f_0\in C^\infty(M,N)$ and $K\subseteq Y$, $L\subseteq Z$ compact discs. We can assume that $\tau(K)$ lies in a coordinate chart $\bbR^m\cong U\subseteq M$ and that $\tau(L)$ lies in a coordinate chart $\bbR^n\cong V\subseteq N$. We use these charts to identify $U$ with $\bbR^m$ and $V$ with $\bbR^n$ when needed. Let $\lambda:U\rightarrow\bbR$ be a compactly supported function such that $\lambda=1$ on a neighborhood of $\tau(K)\cap f_0^{-1}(\tau(L))$ and such that $\lambda=0$ on $U-f_0^{-1}(V)$. This is summarized in the picture above where we put $W=\interior\lambda^{-1}(1)$.

%\textbf{Need a picture!}
We set $Q:=J^r_0(\bbR^m,\bbR^n)$ and identify it both with $J^r_*(U,V)$, where $*$ stands for an arbitrary point in $U$, and also with the space of polynomial mappings $U\ra V$. Then we get a map
\[\beta:Q\rightarrow C^\infty(M,N)\]
sending $q$ to the function $f_0+\lambda q$ where the operations
are interpreted inside $V$ via the chart. It is continuous (in the
strong topology)
%(\textbf{proof?})
and the adjoint map
\begin{equation} \label{eqn_gamma}
\gamma=(\alpha\beta)^\sharp:Q\times Y\rightarrow J^r(D,N)
\end{equation}
is smooth. Thus it is enough to show that (after a suitable
restriction) $\gamma\pitchfork Z$. Clearly $\gamma$ sends
$(q,j^r_x(h))$ to $j^r_x((f_0+\lambda q)h)$. Suppose now that
$h(x)\in W$ so that this equals to
$j^r_x(f_0h+qh)$. By restriction we get a map
\[\delta:Q\cong Q\times\{j^r_x(h)\}\xlra{\gamma}J^r_x(D,V).\]
In the affine structure on $J^r_x(D,V)$ inherited from the chart,
$\delta$ is clearly affine. Identifying $Q$ with $J^r_{h(x)}(U,V)$
the linear part of $\delta$ is just a precomposition with $h$
\begin{equation} \label{eqn_precomposition}
h^*:J^r_{h(x)}(U,V)\rightarrow J^r_{x}(D,V).
\end{equation}
The map $h$, being an immersion, has (locally - near $x$) a left
inverse $\pi$ which then gives a right inverse $\pi^*$ of $h^*$
and so the linear part of $\delta$ is surjective and hence it is a
submersion.

In the horizontal direction our transversality condition $\sigma_Y\pitchfork\sigma_Z$ applies and so $\gamma\pitchfork Z$ on $Q\times \tau_Y^{-1}(W)$. If $f:M\rightarrow N$ is close enough to $f_0$ then
\[\tau(K)\cap f^{-1}(\tau(L))\subseteq W\]
(equivalently $f(\tau(K)-W)\subseteq N-\tau(L)$ which is one of the basic open sets for the compact-open topology) and in particular there is a neighbourhood $Q'$ of $0$ in $Q$ such that $\beta(Q')$ consists only of such maps. Therefore the restriction of $\gamma$ to
\[Q'\times K\lra J^r(D,N)\]
is transverse to $L$ and hence the same is also true in some neighbourhoods of $K$ and $L$ which then form the coverings required in Lemma~\ref{lemma_general_transversality}.

If $\tau_Y$ happens to be proper then $\alpha$ is continuous even in strong topologies and $\mfX$ is a preimage of the open subset of maps $f:Y\rightarrow J^r(D,N)$ transverse to $Z$.
\end{proof}

Now we prove two corollaries of the previous theorem.

%{\renewcommand{\thetheorem}{B}
%\begin{theorem}[Thom's Transversality Theorem] Let $M$, $N$ be
%manifolds, $Z\subseteq J^r(M,N)$ a submanifold. Then the set
%\[\mfX:=\{f\in C^\infty(M,N)\ |\ j^rf\pitchfork Z\}\]
%is residual in $C^\infty(M,N)$. If $Z$ is closed (as a subset)
%then it is also open.
%\end{theorem}
%\addtocounter{theorem}{-1}}

\begin{proof}[Proof of Theorem~\ref{theorem_thom}] We apply Theorem~\ref{theorem_general} to $D=M$ and
%\[\xymatrix{
%\rightbox{Y={}}{M} \ar@{c->}[rr]^{j^r(\id)} \ar[rd]_{\id} & & J^r(M,M)
%\ar[ld]^{\sigma}
%\\
%& M}\]
\[\xymatrix{
Y=M \ar@{c->}[rr]^{j^r(\id)} & & J^r(M,M)
}.\]
As $\sigma_Y=\id=\tau_Y$ it is both proper and transverse
to $\sigma_Z$ for any $Z$.
\end{proof}

\begin{corollary} Let $M$, $N$ be manifolds, $Y\subseteq
J^r_{0,\imm}(\bbR^d,M)$ and $Z\subseteq J^r_0(\bbR^d,N)$
submanifolds. Then the subset
\[\bigl\{f\in C^\infty(M,N)\ \bigl|\ f_*|_Y\pitchfork Z\bigr\}\]
is residual in $C^\infty(M,N)$ with the strong topology, and open
if $Z$ is closed (as a subset) and $\tau_Y:Y\ra M$ proper.
\end{corollary}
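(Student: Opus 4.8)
The plan is to deduce residuality from Theorem~\ref{theorem_general} by spreading the single source point $0$ over all of $\bbR^d$, and to obtain openness by repeating the last step of the proof of Theorem~\ref{theorem_general} directly (the thickening used for residuality destroys properness, so it cannot simply be fed back into the theorem).

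For residuality, take $D=\bbR^d$ and use the translations $t_x(v)=v+x$ of $\bbR^d$ to identify $J^r_\imm(\bbR^d,M)\cong\bbR^d\times J^r_{0,\imm}(\bbR^d,M)$ via $j^r_x h\mapsto(x,j^r_0(h\circ t_x))$, and likewise $J^r(\bbR^d,N)\cong\bbR^d\times J^r_0(\bbR^d,N)$. Under these identifications the source maps become the projections onto $\bbR^d$, and, since $f_*$ preserves sources, $f_*$ becomes $\id_{\bbR^d}\times\bigl(f_*|_{J^r_{0,\imm}(\bbR^d,M)}\bigr)$. Let $\tilde Y\subseteq J^r_\imm(\bbR^d,M)$ and $\tilde Z\subseteq J^r(\bbR^d,N)$ be the submanifolds corresponding to $\bbR^d\times Y$ and $\bbR^d\times Z$; they are submanifolds, being preimages of $Y$ and $Z$ under the (submersive) projections. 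Now $\sigma_{\tilde Y}$ and $\sigma_{\tilde Z}$ are the projections onto $\bbR^d$, hence submersions, so $\sigma_{\tilde Y}\pitchfork\sigma_{\tilde Z}$ is automatic; and $f_*|_{\tilde Y}=\id_{\bbR^d}\times(f_*|_Y)$, so $f_*|_{\tilde Y}\pitchfork\tilde Z$ if and only if $f_*|_Y\pitchfork Z$. Hence the set $\mfX$ of the statement equals $\{f\mid f_*|_{\tilde Y}\pitchfork\tilde Z\}$, which Theorem~\ref{theorem_general} shows to be residual in the strong topology. (One could also rerun the proof of Theorem~\ref{theorem_general} verbatim; it is in fact easier here, since all jets of $Y$ share the source $0$ and the perturbation directions already fill the whole fibre of the source map.)

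For openness, assume $Z$ is closed and $\tau_Y:Y\to M$ is proper. As noted, $\tau_{\tilde Y}:\bbR^d\times Y\to M$ factors through the projection to $Y$ and so is not proper for $d\geq1$, so I argue directly, as in the last paragraph of the proof of Theorem~\ref{theorem_general}: properness of $\tau_Y$ makes the map $\alpha:C^\infty(M,N)\to C^\infty(Y,J^r_0(\bbR^d,N))$, $f\mapsto f_*|_Y$, continuous for the strong topologies, because over a locally finite family of compacta in $Y$ one controls $f_*|_Y$ by controlling finitely many derivatives of $f$ over their $\tau_Y$-images, whose $\tau_Y$-preimages are compact. Since $Z$ is closed, the set of maps $Y\to J^r_0(\bbR^d,N)$ transverse to $Z$ is open in the strong topology, and $\mfX$ is its $\alpha$-preimage, hence open.

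The only step requiring a little care is the identity $f_*|_{\tilde Y}=\id_{\bbR^d}\times(f_*|_Y)$ in the translation coordinates, together with the remark that $\tilde Z$ is closed precisely when $Z$ is; these make the thickened and unthickened transversality conditions coincide and reduce everything to Theorem~\ref{theorem_general}. I do not expect a serious obstacle: the remaining ingredients are either a routine unwinding of definitions or a literal repetition of arguments already given for Theorem~\ref{theorem_general}.
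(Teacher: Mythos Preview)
Your argument is correct, but the paper's proof is shorter because it only thickens $Z$, not $Y$. Since $J^r_{0,\imm}(\bbR^d,M)$ is the fibre over $0$ of the source submersion, $Y$ is already a submanifold of $J^r_\imm(\bbR^d,M)$ and can be fed into Theorem~\ref{theorem_general} unchanged, with $\bbR^d\times Z\subseteq J^r(\bbR^d,N)$ in place of $Z$. Then $\sigma_Y$ is the constant map to $0$ while $\sigma_{\bbR^d\times Z}$ is the projection, hence a submersion, so $\sigma_Y\pitchfork\sigma_{\bbR^d\times Z}$ is automatic; and $f_*|_Y\pitchfork(\bbR^d\times Z)$ in $\bbR^d\times J^r_0(\bbR^d,N)$ is equivalent to $f_*|_Y\pitchfork Z$ in $J^r_0(\bbR^d,N)$ because the image of $f_*|_Y$ sits in $\{0\}\times J^r_0(\bbR^d,N)$ and the $\bbR^d$-direction is already filled by $T(\bbR^d\times Z)$.

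The payoff of not thickening $Y$ is that the target map is still the original $\tau_Y:Y\to M$, so the properness hypothesis survives and openness comes directly from Theorem~\ref{theorem_general} with no extra work. Your detour---thickening $Y$ to $\tilde Y=\bbR^d\times Y$, noting that $\tau_{\tilde Y}$ loses properness, and then redoing the openness argument by hand---is valid but avoidable. The only thing your route buys is the explicit product decomposition $f_*=\id_{\bbR^d}\times(f_*|_{J^r_{0,\imm}})$, which is pleasant but not needed here.
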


\begin{proof} Under the natural identification $\bbR^d\times J^r_0(\bbR^d,N)\cong J^r(\bbR^d,N)$ we can apply Theorem~\ref{theorem_general} to $D=\bbR^d$, the same $M$, $N$ and $Y$ but with $\bbR^d\times Z\subseteq J^r(\bbR^d,N)$ in place of $Z$.
\end{proof}

Now we are finally able to prove our main theorem.

\begin{proof}[Proof of Theorem~\ref{theorem_main}]
Consider first the special case $s=0$ and
\[A=M\times A_0\subseteq M\times N=J^0(M,N).\]
Then $j^0f\pitchfork A$ iff $f\pitchfork A_0$ and under this assumption Lemma~\ref{lemma_equivalence3} provides a translation between $g|_{f^*A}\pitchfork f^*A[B]$ and $f_*|_Y\pitchfork Z$, where $Z=J^r_0(\bbR^d,A_0)\subseteq J^r_0(\bbR^d,N)$. The genericity of $f\pitchfork A_0$ is the usual transversality theorem while the genericity of $f_*|_Y\pitchfork Z$ is the last corollary.

The next step is to replace the transversality to $A_0\subseteq N$ by a jet transversality condition. Let $A\subseteq J^s(M,N)$ be a submanifold and consider
\[\xymatrix{
f^*A \ar[r] \ar@{c->}[d]^-j & A \ar@{c->}[d] \\
M \ar[r]^-{j^sf} \ar[d]^{g} & J^s(M,N) \\
P}\]
It is possible to apply Lemma~\ref{lemma_equivalence3} to this situation and translate $g|_{f^*A}\pitchfork f^*A[B]$ to the transversality of the composition
\[\xymatrix@C=20pt{
Y \ar@{c->}[r] & J^r_{0,\imm}(\bbR^d,M) \ar[rr]^-{(j^sf)_*} & & J^r_0(\bbR^d,J^s(M,N))
}\]
to $J^r_0(\bbR^d,A)$. We cannot however apply Theorem~\ref{theorem_general} directly since we are not interested in all maps $M\ra J^s(M,N)$ but only in the holonomic sections (those of the form $j^sf$). This means that in our proof of Theorem~\ref{theorem_general}, $Q=J^r_*(\bbR^m,J^s(\bbR^m,\bbR^n))$ has to be replaced by its subspace $J^{r+s}_*(\bbR^m,\bbR^n)$ and in general there is no guarantee that the new map $\gamma$ (see (\ref{eqn_gamma})) will be transverse (after restriction) to $Z=J^r_0(\bbR^d,A)$. This is however easily implied by $\kappa\pitchfork J^r_0(\bbR^d,A)$ since again we can arrive at the linear part of $\gamma$ being a composition map (as in~(\ref{eqn_precomposition})) which is then easily identified with $\kappa$.
\end{proof}

%\begin{lemma} Let $f_1,\ldots,f_n:M\rightarrow E$ be smooth
%maps, where $M$ is a smooth manifold, $E$ is some Euclidean space.
%Let $Z\subseteq J^r(M,E)$ be a submanifold. For any
%$f_0:M\rightarrow E$ we can form $f:\bbR^n\times M\rightarrow E$
%by
%\[f(t_1,\ldots,t_n,x)=f_0(x)+t_1f_1(x)+\cdots+t_nf_n(x)\]
%and a partial (or fibrewise) jet prolongation
%$j^r_{\bbR^n}f:\bbR^n\times M\rightarrow J^r(M,E)$
%\[(t_1,\ldots,t_n,x)\mapsto j^r_x f(t_1,\ldots,t_n,-)=j^r_xf_0+t_1j^r_xf_1+\cdots+t_nj^r_xf_n\]
%where on the right we use the vector space structure on $E$ to
%make sense of the sum. Then the set
%\[\left\{f_0\in C^\infty(M,E)\ \bigl|\ j^r_{\bbR^n}f\pitchfork Z\right\}\]
%is residual in the strong topology on $C^\infty(M,E)$.
%\end{lemma}
%
%\begin{proof} One can prove this using the same techniques as in
%the proof of Theorem~\ref{theorem_general}.
%\end{proof}

\section{A general transversality lemma} \label{section_general_transversality}

We will formulate a basic lemma for deciding whether a given family of maps contains a dense subset of maps with a particular transversality property. In a sense this is just the essence of any proof of such a statement. We will be considering maps $\varphi:R\rightarrow C^\infty(S,T)$. We denote by $\varphi^\sharp$ its adjoint
\[\varphi^\sharp:R\times S\rightarrow T.\]

\begin{lemma} \label{lemma_general_transversality}
Let $S$, $T$ be smooth manifolds and $Z\subseteq T$ a submanifold. Let there be given two open coverings: $\mcU$ of $S$ and $\mcV$ of $T$. Let $R$ be a topological space and $\varphi:R\rightarrow C^\infty(S,T)$ a continuous map where $C^\infty(S,T)$ is given the weak topology. Assume that for every $r_0\in R$ and every $U\in\mcU$, $V\in\mcV$ there
is a finite dimensional manifold $Q$ and a continuous map $k:Q\rightarrow R$ with $r_0$ in its image such that
\[Q\times U\xlra{k\times incl}R\times S\xlra{\varphi^\sharp}T\]
is smooth and transverse to $V$. Then the subset
\[\mfX:=\bigl\{r\in R\ \bigl|\ \varphi(r)\pitchfork Z\bigr\}\subseteq R\]
is residual in $R$.
\end{lemma}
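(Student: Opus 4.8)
The plan is to reduce to a countable union of "good" open sets and then apply Baire-type reasoning via the parametric transversality theorem. First I would pick countable subfamilies: since $S$ and $T$ are (second countable) manifolds, choose a countable locally finite refinement of $\mcU$ by charts $U_i$ with compact closures, and a countable refinement of $\mcV$ by $V_j$; it suffices to arrange that for each $i$ the closure $\overline{U_i}$ is compact and contained in some $U\in\mcU$, and similarly for $V_j$. Then $\varphi(r)\pitchfork Z$ holds if and only if for every pair $(i,j)$ the restricted map $\varphi(r)|_{U_i}$ is transverse to $Z\cap V_j$ at every point of $\overline{U_i}\cap \varphi(r)^{-1}(\overline{V_j})$ — because transversality is a local condition and the $U_i,V_j$ cover. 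So $\mfX=\bigcap_{i,j}\mfX_{ij}$ where $\mfX_{ij}=\{r\mid \varphi(r)|_{U_i}\pitchfork Z\text{ on }\overline{U_i}\text{ into }V_j\}$; a countable intersection, so it is enough to show each $\mfX_{ij}$ contains a dense open set.

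For openness: fix $(i,j)$. The set of maps $S\to T$ whose restriction to $\overline{U_i}$ (compact) is transverse to the submanifold $Z\cap V_j$ along that compact set is open in the weak topology on $C^\infty(S,T)$ — this is the standard openness of transversality along a compact set, using that $Z\cap V_j$ is locally closed and $\overline{U_i}$ is compact. Since $\varphi$ is continuous for the weak topology, the preimage is open in $R$; one should be slightly careful at the boundary, but shrinking the covers so that $\overline{U_i}\subseteq U$ and $\overline{V_j}\subseteq V$ for honest cover members handles this. Call this open set $\mfX_{ij}$; we have $\mfX\subseteq\bigcap \mfX_{ij}$, and in fact equality if the covers are chosen to still cover after passing to the open cores. (If one only gets $\mfX=\bigcap_{ij}\mathring{\mfX}_{ij}$ with $\mathring{\mfX}_{ij}$ the interior, that is still a countable intersection of opens, which is all we need.)

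For density of $\mfX_{ij}$: fix $r_0\in R$ and the corresponding $U\in\mcU$, $V\in\mcV$ with $\overline{U_i}\subseteq U$, $\overline{V_j}\subseteq V$. By hypothesis there is a finite-dimensional manifold $Q$ and continuous $k:Q\to R$ with $r_0\in k(Q)$ such that $\Phi:=\varphi^\sharp\circ(k\times \mathrm{incl}):Q\times U\to T$ is smooth and transverse to $V$, hence to the submanifold $Z\cap V$ of $V$. Apply the parametric transversality theorem (Thom's lemma on the density of the "good" parameters) to $\Phi$: the set of $q\in Q$ for which $\Phi(q,\cdot):U\to T$ is transverse to $Z\cap V$ is residual, hence dense, in $Q$. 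Pick $q_0$ with $k(q_0)=r_0$; by density there are $q$ arbitrarily close to $q_0$ in $Q$ with $\Phi(q,\cdot)\pitchfork Z\cap V$, a fortiori transverse along $\overline{U_i}$ into $V_j$, so $k(q)\in\mfX_{ij}$. Since $k$ is continuous, $k(q)$ is arbitrarily close to $k(q_0)=r_0$ in $R$, giving density of $\mfX_{ij}$ near $r_0$; as $r_0$ was arbitrary, $\mfX_{ij}$ is dense.

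The main obstacle is the bookkeeping at the boundary: one must set up the countable covers so that the pointwise condition "$\varphi(r)\pitchfork Z$" is exactly captured by the (compact-set, open-target) conditions defining the $\mfX_{ij}$, and so that those conditions are genuinely open under weak-topology perturbation — this is where the compactness of $\overline{U_i}$ and the inclusions $\overline{U_i}\subseteq U$, $\overline{V_j}\subseteq V$ are used. The transversality input itself is then a black-box application of the parametric transversality theorem on the finite-dimensional parameter space $Q$; the only subtlety there is that we need transversality of $\Phi$ to $V$ (an open submanifold, trivially giving transversality to $V$ as "codimension zero") to upgrade to transversality to $Z\cap V$, which is immediate since $Z\cap V$ is a submanifold of the open set $V$ and $\Phi$ already maps a neighbourhood into $V$ wherever it meets $\overline{V_j}$.
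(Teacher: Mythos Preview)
Your strategy is the paper's: write $\mfX$ as a countable intersection of sets $\mfX_{ij}$, show each is open by continuity of $\varphi$ into the weak topology, and show each is dense by invoking the finite-dimensional family $k:Q\to R$ together with the parametric transversality theorem. Two points deserve correction.

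First, the paper covers $Z$ (not $T$) by compact discs $L_j$, each with a neighbourhood $V_j\in\mcV$, and sets $\mfX_{ij}=\{r\mid\varphi(r)\pitchfork L_j\text{ on }K_i\}$. Using a \emph{compact} piece $L_j$ of $Z$ in the target is what makes the openness argument go through cleanly: the set of maps transverse to a fixed compact subset of $Z$ along a compact $K_i$ is open in the weak topology. Your version, with the non-closed submanifold $Z\cap V_j$ in the target, does not directly give an open condition; a non-transverse intersection point can escape through the frontier of $V_j$ under small perturbation. This is exactly the boundary bookkeeping you flag as ``the main obstacle'', and the fix is the paper's: cover $Z$ by compacta rather than covering $T$ by opens.

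Second, your last paragraph misreads the hypothesis. In the lemma, ``transverse to $V$'' should be read as ``transverse to $Z\cap V$'' (equivalently: transverse to $Z$ at all points landing in $V$); this is how the paper uses it in its own proof and in the proof of Theorem~A. Your attempt to \emph{derive} transversality to $Z\cap V$ from literal transversality to the open set $V$ is vacuous reasoning: every smooth map is transverse to an open submanifold, and that says nothing about transversality to $Z\cap V$. Once you read the hypothesis correctly, there is nothing to upgrade --- $\Phi\pitchfork Z\cap V$ is given, and the parametric transversality theorem applies directly.
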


\begin{proof}
Following the proof of the Theorem 4.9. of Chapter 4 of \cite{GolubitskyGuillemin}, let us cover $S$ by a countable family
of compact discs $K_i$ that have a neighbourhood $U_i\in\mcU$ and at the same time we choose a covering of $Z$ by a countable family of compact discs $L_j$ that have a neighbourhood $V_j\in\mcV$. Then the set $\mfX$ is a countable intersection of the sets
\[\mfX_{ij}:=\bigl\{r\in R\ \bigl|\ \varphi(r)\pitchfork L_j\textrm{ on }K_i\bigr\}\]
and it is enough to show that each $\mfX_{ij}$ is open and dense. The set $\hat{\mfX}_{ij}$ of maps $S\rightarrow T$ transverse to $L_j$ on $K_i$ is open in $C^\infty(S,T)$ and $\mfX_{ij}=\varphi^{-1}(\hat{\mfX}_{ij})$ so it is also open.

To prove the denseness we fix $r_0\in R$ and choose a map $k:Q\rightarrow R$ with $r_0=k(q_0)$ such that the map
\[l:Q\times U_i\rightarrow T\]
from the statement is smooth and transverse to $V_j$. By the parametric transversality theorem (see e.g.~Theorem 2.7.,
Chapter 3 in \cite{Hirsch}) the points $q\in Q$ for which $l(q,-)\pitchfork V_j$ is dense in $Q$. In particular $q_0$ lies in the closure of this set and hence $r_0$ lies in the closure of its image in $R$. But this image certainly lies in $\mfX_{ij}$.
\end{proof}

\end{document}